\declaretheorem[style = plain, numberwithin = section]{theorem}
\declaretheorem[style = plain,      sibling = theorem]{lemma}
\declaretheorem[style = plain,      sibling = theorem]{proposition}
\declaretheorem[style = definition, sibling = theorem]{definition}
\declaretheorem[style = definition, sibling = theorem]{remark}
\newcommand{\N}{\mathbb{N}}
\newcommand{\C}{\mathbb{C}}
\DeclareMathOperator{\cdim}{cdim}
\newcommand{\B}{\mathcal{B}}
\DeclareMathOperator{\covol}{covol}
\def\lbdd#1{\mathrm{L}_{#1}}
\def\rbdd#1{\mathrm{R}_{#1}}
\def\vN{\mathrm{W}^*}
\newcommand{\Hi}{\mathcal{H}}
\title{Bessel duality of Gabor systems: A von Neumann algebraic perspective}
\author{Ulrik Enstad}
\address{Department of Mathematics,
University of Oslo,
Moltke Moes vei 35,
0851 Oslo.}
\email{ubenstad@math.uio.no}
\author{Franz Luef}
\address{Department of Mathematical Sciences,
Norwegian University of Sciences and Technology,
7491 Trondheim, Norway}
\email{franz.luef@ntnu.no}
\thanks{
UE was supported by The Research Council of Norway through project 314048.
}
\begin{document}

\begin{abstract}
Bessel duality of regular Gabor systems states that a Gabor system over a lattice is a Bessel sequence if and only if the corresponding Gabor system over the adjoint lattice is a Bessel sequence. We show that this fundamental result of time-frequency analysis can be deduced from a theorem in the theory of bimodules over von Neumann algebras, namely that under certain conditions, their left and right bounded vectors coincide.
\end{abstract}

\maketitle

\section{Introduction}

This paper concerns the duality theory of Gabor systems over lattices, which relates spanning properties of a Gabor system to dual spanning properties of the corresponding Gabor system over the adjoint lattice. The duality theory was established independently by Janssen \cite{Ja95}, Daubechies, H.\ J.\ Landau and Z.\ Landau \cite{DaLaLa95}, and Ron and Shen \cite{RoSh97}. The theory for general lattices in locally compact abelian groups is due to Feichtinger and Kozek \cite{FeKo98} (see also \cite{JaLe16} for duality principles for closed subgroups of the time-frequency plane, \cite{GrKo19} for an introductory account, and \cite{HaLa00,BaDuHaLaLu20,DuHaLa09} for duality principles in other contexts).

Nearly a decade earlier, Rieffel \cite{Ri88,Ri88-case} devised a method for constructing projective modules over noncommutative tori, so-called Heisenberg modules. In their most general form, these modules implement a Morita equivalence between a twisted group C*-algebra of a lattice in the time-frequency plane and a twisted group C*-algebra of its adjoint lattice. It was later discovered that Heisenberg modules provide a natural framework to interpret the duality theory of well-localized Gabor frames \cite{Lu09,Lu11,AuJaLu20}. In particular, it was shown in \cite{Lu09} (see also \cite{AuEn20}) that generators of Heisenberg modules correspond to well-localized multi-window Gabor frames, a characterization that relies fundamentally on the notion of frames in Hilbert C*-modules introduced by Frank and Larson \cite{FrLa02}. Heisenberg modules and similar module constructions have since been used to make progress on the converse of the Balian--Low theorem and its generalizations, see \cite{GrRoRo20,EnThVi25,BeEnva22,EnVi25,JaLu20}.

We shall be concerned with the most basic duality principle for Gabor systems, namely Bessel duality. Since a Gabor system with sufficiently well-localized window (e.g.\ member of the Schwartz space or Feichtinger's algebra) is automatically a Bessel sequence, Bessel duality cannot be captured by the C*-algebraic framework of Heisenberg modules. Our goal is to fill this gap and provide an analogous framework based on von Neumann algebras in which Bessel duality (and in fact the duality theory of Gabor frames with general $L^2$-windows) has a natural interpretation. Thus, we shall prove no new theorems in time-frequency analysis, but rather show how a well-known result in this field fits into a broader framework in operator algebras.

To formulate Bessel duality, fix a second-countable, locally compact group $G$ with Pontryagin dual $\widehat{G}$. Letting $g \in L^2(G)$ and $\Delta \subseteq G \times \widehat{G}$ be a lattice, we denote by $\mathcal{G}(g,\Delta)$ the corresponding Gabor system (see \Cref{sec:gabor}, \eqref{eq:gabor-system}).

\begin{theorem}[Bessel duality]\label{thm:bessel}
For a lattice $\Delta$ in $G \times \widehat{G}$ with adjoint lattice $\Delta^{\circ}$ (see \eqref{eq:adjoint-lattice}) and $g \in L^2(G)$, the following are equivalent:
\begin{enumerate}
    \item The Gabor system $\mathcal{G}(g,\Delta)$ is a Bessel sequence with bound $B$, that is,
    \[ \sum_{z \in \Delta} |\langle f, \pi(z) g \rangle |^2 \leq B \| f \|_2^2, \qquad f \in L^2(G). \]
    \item The Gabor system $\mathcal{G}(g,\Delta^{\circ})$ is a Bessel sequence with bound $\covol(\Delta)B$, that is,
    \[ \sum_{z \in \Delta^{\circ}} |\langle f, \pi(z) g \rangle |^2 \leq \covol(\Delta)B \| f \|_2^2, \qquad f \in L^2(G) . \]
\end{enumerate}
\end{theorem}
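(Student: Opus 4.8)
The plan is to recast both Bessel conditions as boundedness of module-valued inner products on a von Neumann bimodule, and then to invoke the coincidence of left and right bounded vectors. First I would reformulate (i) and (ii) operator-theoretically: $\mathcal{G}(g,\Delta)$ is Bessel with bound $B$ precisely when the positive Gabor frame operator $S_{g,\Delta}f = \sum_{z\in\Delta}\langle f,\pi(z)g\rangle\pi(z)g$ satisfies $\|S_{g,\Delta}\| \le B$, since $\langle S_{g,\Delta}f,f\rangle = \sum_{z\in\Delta}|\langle f,\pi(z)g\rangle|^2$; likewise (ii) is equivalent to $\|S_{g,\Delta^{\circ}}\| \le \covol(\Delta)B$. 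Thus the statement reduces to the single norm identity $\|S_{g,\Delta^{\circ}}\| = \covol(\Delta)\,\|S_{g,\Delta}\|$, understood in the sense that one side is bounded iff the other is.

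Next I would introduce the two von Neumann algebras $\mathcal{M} = \pi(\Delta)''$ and $\mathcal{N} = \pi(\Delta^{\circ})''$ acting on $L^2(G)$. The defining property of the adjoint lattice, namely $\pi(w)\pi(z) = \pi(z)\pi(w)$ for $w \in \Delta^{\circ}$ and $z\in\Delta$, gives $\mathcal{N}\subseteq\mathcal{M}'$; the commutation theorem for the twisted regular representation of a lattice upgrades this to $\mathcal{N} = \mathcal{M}'$, so that $L^2(G)$ becomes an $\mathcal{M}$--$\mathcal{N}$ bimodule whose two algebras are mutual commutants. A direct computation using the Heisenberg commutation relations shows $S_{g,\Delta}$ commutes with every $\pi(z)$, $z\in\Delta$, hence $S_{g,\Delta}\in\mathcal{M}'=\mathcal{N}$, and symmetrically $S_{g,\Delta^{\circ}}\in\mathcal{M}$.

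I would then exhibit the frame operators as the diagonal values of the bimodule's left and right inner products. With the normalizations dictated by the Janssen/Poisson correspondence, the right $\mathcal{N}$-valued inner product satisfies $\rhs{g}{g} = S_{g,\Delta}$ and the left $\mathcal{M}$-valued inner product satisfies $\lhs{g}{g} = \covol(\Delta)^{-1} S_{g,\Delta^{\circ}}$. In this language, (i) says exactly that $g$ is a right-bounded vector with $\|\rhs{g}{g}\|\le B$, and (ii) that $g$ is a left-bounded vector with $\|\lhs{g}{g}\| \le B$. The theorem on coincidence of left and right bounded vectors then supplies both the equivalence and the matching norms $\|\lhs{g}{g}\| = \|\rhs{g}{g}\|$, which unwinds to $\|S_{g,\Delta^{\circ}}\| = \covol(\Delta)\|S_{g,\Delta}\|$ and hence the claimed bound $\covol(\Delta)B$.

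The main obstacle is twofold and lies entirely on the operator-algebra side. First, the module-valued inner products are defined a priori only as densely-defined quadratic forms, and one must verify that the Bessel hypothesis is exactly what promotes $\rhs{g}{g}$ (resp. $\lhs{g}{g}$) to a genuine bounded element of $\mathcal{N}$ (resp. $\mathcal{M}$); this is where the subtle weak-convergence issues of the Janssen representation for general $L^2$-windows must be handled. Second, invoking the coincidence theorem requires checking that the bimodule $L^2(G)$ meets its hypotheses and, crucially, pinning down the normalization so that the covolume factor $\covol(\Delta)$ emerges with the correct power rather than $\covol(\Delta)^{\pm 2}$; tracking this constant through the trace/weight normalizations of $\mathcal{M}$ and $\mathcal{N}$ is the most error-prone part of the argument.
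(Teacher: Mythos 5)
Your proposal is correct and follows essentially the same route as the paper: recast both Bessel conditions as left/right boundedness of $g$ in the $M$-$N$-bimodule $L^2(G)$ over the twisted group von Neumann algebras of $\Delta$ and $\Delta^{\circ}$, use the commutation theorem to see that the two algebras are mutual commutants (so the centers coincide and the product of center-valued dimensions is $1$), and invoke the coincidence of left and right bounded vectors with $\| \rbdd{g} \| = \| \lbdd{g} \|$. The only cosmetic difference is that you phrase boundedness via the frame operators $S_{g,\Delta} = \rbdd{g}\rbdd{g}^*$ and the module-valued inner products, whereas the paper works directly with the synthesis operators and the upper Riesz bound; the normalization issue you flag is handled in the paper by the choice $\kappa(\delta_z) = \covol(\Delta)\,\delta_{z,0}$ on $\vN(\Delta^{\circ},c^{\mathrm{op}})$, which is exactly where the single factor $\covol(\Delta)$ enters.
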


We now detail the von Neumann algebraic framework in which \Cref{thm:bessel} has a natural interpretation. This framework is provided by considering the Hilbert space $L^2(G)$ as a bimodule over twisted group von Neumann algebras of the lattices $\Delta$ and $\Delta^{\circ}$. In general, such a bimodule over von Neumann algebras equipped with traces comes with notions of right bounded vectors and left bounded vectors, see \Cref{subsec:bounded}. In our case, it turns out that $g \in L^2(G)$ is right bounded (resp.\ left bounded) if and only if the Gabor system $\mathcal{G}(g,\Delta)$ (resp.\ $\mathcal{G}(g,\Delta^{\circ})$) is a Bessel sequence (see \Cref{prop:bessel-char}). We are thus concerned with finding a suitable theorem ensuring that the left and right bounded vectors of such a bimodule coincide.

For a bifinite bimodule over factorial von Neumann algebras, it is well-known that the left and right bounded vectors coincide, see e.g.\ \cite[4.\ Proposition]{Su92}, \cite[Proposition 1.5]{Bi94}, or \cite[Proposition 5.5]{Fa09}. However, the twisted group von Neumann algebras associated with lattices in the time-frequency plane are rarely factors, so we need a more general result. Beyond the factorial case, the appropriate notion of bifiniteness is that the module has bounded center-valued von Neumann dimension (see \Cref{subsec:center-valued}) both as a left and a right module (equivalently, being finitely generated both as a left and as a right module). However, this is not enough to ensure that the left and right bounded vectors coincide, as remarked in e.g.\ \cite[Exercise 9.11]{AnPo}. One needs additionally that the centers of the two von Neumann algebras coincide, and that their traces are aligned appropriately. Under these conditions we have the following theorem. (See \Cref{subsec:bounded} for an explanation of the notation we use for bounded vectors, in particular the associated operators $\lbdd{f}$ and $\rbdd{f}$.)

\begin{theorem}\label{thm:left-right-bounded}
Let $(M,\tau)$ and $(N,\kappa)$ be tracial, separable von Neumann algebras and let $\Hi$ be an $M$-$N$-bimodule satisfying the following properties:
\begin{enumerate}
    \item $\Hi$ is finitely generated both as a left $M$-module and as a right $N$-module.
    \item $\Hi$ is faithful\footnote{See \Cref{rmk:faithful} for a version without the assumption of faithfulness.} both as a left $M$-module and as a right $N$-module, and the centers of $M$ and $N$ in $\B(\Hi)$ coincide.
\end{enumerate}
Suppose that $\tau$ and $\kappa$ are aligned in the sense of \Cref{def:alignment}. Then the left and right bounded vectors in $\Hi$ coincide. Specifically,
\begin{equation}
    \| \mathrm{R}_f \| \leq \| \cdim(_M \Hi) \cdot \cdim(\Hi_N) \| \cdot \| \mathrm{L}_f \| \label{eq:bounded-estimate}
\end{equation}
for all left bounded (equivalently right bounded) $f \in \Hi$, where $\cdim$ denotes the center-valued von Neumann dimension as defined in \Cref{subsec:center-valued}.
\end{theorem}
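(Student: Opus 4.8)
The plan is to reduce \Cref{thm:left-right-bounded} to the already-understood factorial case by a direct-integral decomposition over the common center, with the alignment hypothesis supplying exactly the compatibility needed to glue the two one-sided pictures together. First I would record what the two sides of \eqref{eq:bounded-estimate} measure. For a left bounded vector $f$ the operator $\mathrm{L}_f \colon L^2(M,\tau) \to \Hi$ intertwines the left $M$-actions, so $\mathrm{L}_f^* \mathrm{L}_f$ lands in the commutant of the left regular representation and $\|\mathrm{L}_f\|^2 = \|\lhs{f}{f}\|_M$; symmetrically $\|\mathrm{R}_f\|^2 = \|\rhs{f}{f}\|_N$. Thus the target inequality is a comparison of operator norms of the $M$- and $N$-valued inner products. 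I would then isolate the factorial case as the engine of the proof: when $M$ and $N$ are factors, $\cdim$ is scalar-valued, bifiniteness is precisely finite generation on both sides, and by \cite{Su92,Bi94,Fa09} the left and right bounded vectors coincide. Tracking constants through that argument — via a two-sided bounded module frame and its reconstruction formula, which is exactly where bifiniteness on \emph{both} sides is consumed — yields, once the traces are normalized as in the aligned situation, the fiberwise estimate $\|\mathrm{R}_f\| \leq \dim_M(\Hi)\cdot\dim(\Hi_N)\cdot\|\mathrm{L}_f\|$.

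Next I would invoke reduction theory. Using separability, identify the common center $Z = Z(M) = Z(N) \subseteq \B(\Hi)$ with $L^\infty(X,\mu)$ for a standard probability space $(X,\mu)$ and disintegrate $\Hi = \int_X^\oplus \Hi_x \, d\mu(x)$, $M = \int_X^\oplus M_x \, d\mu(x)$, $N = \int_X^\oplus N_x \, d\mu(x)$. Because we decompose over the \emph{full} common center, $M_x$ and $N_x$ are factors and $\Hi_x$ is an $M_x$–$N_x$-bimodule for a.e.\ $x$; faithfulness on both sides guarantees the fibers are non-degenerate almost everywhere, so the factorial hypotheses genuinely hold fiberwise. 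Property (i) says exactly that $\cdim(_M \Hi)$ and $\cdim(\Hi_N)$ are \emph{bounded} central elements, i.e.\ essentially bounded functions on $X$ whose values are the fiber dimensions $d_M(x) = \dim_{M_x}\Hi_x$ and $d_N(x) = \dim(\Hi_x)_{N_x}$ (this is the content of \Cref{subsec:center-valued}).

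Finally I would assemble the estimate, and this is where alignment is decisive. A priori $\tau$ and $\kappa$ disintegrate over possibly different base measures, while left boundedness is measured against $\tau$ and right boundedness against $\kappa$; to compare them in a single fiber one needs both decompositions carried by the \emph{same} measure $\mu$ with compatible fiber traces $\tau_x,\kappa_x$. This is precisely what \Cref{def:alignment} encodes, and its absence is what produces the phenomena of \cite[Exercise 9.11]{AnPo}. Granting it, a field $f = (f_x)_x$ is left bounded iff $f_x$ is fiberwise left bounded a.e.\ with $x \mapsto \|\mathrm{L}_{f_x}\|$ essentially bounded, in which case $\|\mathrm{L}_f\| = \esssup_x \|\mathrm{L}_{f_x}\|$, and likewise on the right. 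Applying the factorial estimate in each fiber and passing to essential suprema then gives
\[ \|\mathrm{R}_f\| = \esssup_x \|\mathrm{R}_{f_x}\| \leq \esssup_x \big( d_M(x) d_N(x) \big) \cdot \esssup_x \|\mathrm{L}_{f_x}\| = \| \cdim(_M \Hi)\cdot \cdim(\Hi_N) \| \cdot \|\mathrm{L}_f\|, \]
and the reverse implication follows from the symmetric argument, so the two classes of bounded vectors coincide. I expect the main obstacle to lie not in this final computation but in the measurable-field bookkeeping of the second and third steps: one must verify that left/right boundedness and the associated operator norms pass faithfully between $\Hi$ and its direct-integral fibers, and that alignment really does force a single base measure, so that ``fiberwise left bounded'' and ``fiberwise right bounded'' refer to one and the same family of fibers. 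Once that is in place the theorem is a fiberwise application of the factorial case.
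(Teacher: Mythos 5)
Your route --- disintegration over the common center and a fiberwise application of the factorial case --- is precisely the alternative the authors mention and deliberately decline in the introduction: ``a direct integral decomposition could also be used to reduce the proof to the already known factorial case, but since we have not been able to find the explicit estimate \eqref{eq:bounded-estimate} in the factorial case, we have chosen to rather prove \Cref{thm:left-right-bounded} from scratch.'' That sentence locates the gap in your proposal exactly. The references you lean on (\cite{Su92,Bi94,Fa09}) give only the \emph{qualitative} statement that left and right bounded vectors coincide for a bifinite bimodule over factors; the quantitative fiberwise estimate $\| \mathrm{R}_{f_x} \| \leq d_M(x)\, d_N(x)\, \| \mathrm{L}_{f_x} \|$, which is the engine of your entire argument, is asserted via ``tracking constants through a two-sided bounded module frame and its reconstruction formula'' without any actual computation. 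That is the heart of the theorem, not a citation away: one would have to carry out essentially the content of the paper's \Cref{lem:essential_lemma}(iii) and \Cref{lem:bounded_vectors_subalgebra} (in scalar form) to produce the constant, at which point the nonfactorial argument of the paper is barely longer. In effect your proposal outsources the main difficulty to a lemma that does not exist in the cited literature.

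A second, subtler gap concerns alignment. In a fiber $M_x$, $N_x$ the traces are unique only up to a scalar, and the fiberwise estimate with constant $d_M(x)d_N(x)$ is correct only if $\tau_x$ and $\kappa_x$ are normalized compatibly; if they differ by a factor $c(x)$, that factor contaminates the constant. Note that even globally $\tau|_Z$ and $\kappa|_Z$ need not agree as measures on $X$ (already for $\Hi = L^2(N,\kappa)^k$ one has $\tau|_Z = k\,\kappa|_Z$), so ``alignment forces a single base measure'' is not literally true; rather, one must show that the global condition of \Cref{def:alignment} disintegrates to a.e.\ fiberwise alignment, absorbing the Radon--Nikodym derivative of $\tau|_Z$ against $\kappa|_Z$ into the fiber normalizations. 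This is plausible (test the alignment identity against operators cut down by central projections) but is not routine bookkeeping and is nowhere verified in your sketch; without it the $\esssup$ computation at the end could be off by an unbounded central factor. By contrast, the paper's proof sidesteps both issues by working inside $\tilde{N} = \B(\Hi_N)$: \Cref{lem:bounded-commutant-module} gives the exact equality $\|\mathrm{L}_f\| = \|\mathrm{R}_f\|$ for the pair $(N, \tilde{N})$, and \Cref{lem:bounded_vectors_subalgebra} handles the passage from $\tilde{N}$ down to $M$ with the explicit constant $\|\cdim({}_M L^2(\tilde{N}))\| = \|\cdim({}_M\Hi)\cdot\cdim(\Hi_N)\|$, with alignment entering only through the single identity $\tilde{\tau}|_M = \tau$.
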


As we are not aware of such a statement in the literature, we spend the majority of the paper proving \Cref{thm:left-right-bounded} in \Cref{sec:bimodule}. Our strategy is inspired by Chapter 8 and Chapter 9 of the unpublished lecture notes \cite{AnPo} as well as Chapter 5 of \cite{Fa09}. We remark that a direct integral decomposition could also be used to reduce the proof to the already known factorial case, but since we have not been able to find the explicit estimate \eqref{eq:bounded-estimate} in the factorial case, we have chosen to rather prove \Cref{thm:left-right-bounded} from scratch.

In \Cref{sec:gabor} we show how \Cref{thm:bessel} follows from \Cref{thm:left-right-bounded} once basic properties of the bimodule $L^2(G)$ have been proved and its left and right bounded vectors have been characterized. We remark that to prove \Cref{thm:bessel} the full power of \Cref{thm:left-right-bounded} is not needed, as in this particular case the two von Neumann algebras $M$ and $N$ are commutants of each other (so in particular, their centers coincide, the products of their center-valued von Neumann dimensions equals $1$, and one gets the equality $\| \rbdd{f} \| = \| \lbdd{f} \|$).

\section{Bimodules over nonfactors}\label{sec:bimodule}

We will take for granted the basic terminology for von Neumann algebras, and refer to \cite{Ta02,JoSu97,AnPo}. Throughout this section $M$ and $N$ will denote separable von Neumann algebras, that is, they have separable preduals. We assume $M$ and $N$ to be equipped with normal faithful finite traces $\tau$ and $\kappa$ (just traces for short), respectively, hence they are finite von Neumann algebras. Note that we do not assume the traces to have norm equal to $1$. We denote by $L^2(M,\tau)$ the corresponding GNS Hilbert space coming from the norm $\| m \|_\tau = \tau(m^*m)^{1/2}$ on $M$, and by $\widehat{M} = \{ \widehat{m} : m \in M \}  \subseteq L^2(M,\tau)$ the image of $M$ inside $L^2(M,\tau)$ (and use similar notation for $N$). We will use the symbol $M'$ to denote the commutant of $M$ inside $\B(L^2(M))$. Then $M$ and $M'$ are related via $M' = JMJ$ where the modular conjugation operator $J \colon L^2(M,\tau) \to L^2(M,\tau)$ is given by $J(\widehat{m}) = \widehat{m^*}$, and $\tau'(a) = \tau(JaJ)$ for $a \in M'$ defines a trace on $M'$. The von Neumann algebra of $k \times k$-matrices with values in $M$ will be denoted by $\mathrm{M}_k(M)$.

\subsection{Modules}

The opposite algebra $N^{\mathrm{op}}$ consists of elements $n^{\mathrm{op}}$ for $n \in N$, but where the multiplication is defined as $n_1^{\mathrm{op}} n_2^{\mathrm{op}} = (n_2 n_1)^{\mathrm{op}}$ for $n_1, n_2 \in M$.

A Hilbert space $\Hi$ is said to be
\begin{itemize}
    \item a \emph{left $M$-module} when it comes equipped with a normal representation $\pi \colon M \to \mathcal{B}(\Hi)$.
    \item a \emph{right $N$-module} when it comes equipped with a normal representation $\rho \colon N^{\mathrm{op}} \to \B(\Hi)$;
    \item an \emph{$M$-$N$-bimodule} when it comes equipped with normal representations $\pi \colon M \to \mathcal{B}(\Hi)$ and $\rho \colon N^{\mathrm{op}} \to \mathcal{B}(\Hi)$ that commute, that is,
\[ \pi(m)\rho(n^{\mathrm{op}}) = \rho(n^{\mathrm{op}})\pi(m) \qquad \text{for all } m \in M \text{ and } n \in N . \]
\end{itemize}

Usually we suppress the normal representations and just write $mf = \pi(m)f$ and $fn = \rho(n^{\mathrm{op}})f$ depending on whether we have a left or right module. For an $M$-$N$-bimodule one then has $m(fn) = (mf)n$ for all $m \in M$, $f \in \Hi$, and $n \in N$.

When we work with a one-sided module, it will usually be a right module. All notions introduced and propositions proved for right modules will have clear analogues for left modules.

We denote by $\B(\Hi_N)$ the set of all bounded, $N$-linear maps on a right $N$-module $\Hi$, and by $\B(_M \Hi)$ the set of all bounded, $M$-linear maps on a left $M$-module $\Hi$. These are both von Neumann subalgebras of $\B(\Hi)$. For two different right $N$-modules $\Hi$ and $\mathcal{K}$, $\B(\Hi_N, \mathcal{K}_N)$ denotes the bounded, $N$-linear operators $\Hi \to \mathcal{K}$.

The Hilbert space $L^2(N,\kappa)$ is an $N$-$N$-bimodule with respect to the actions
\begin{align*}
    n_1 \widehat{n_2} = \widehat{n_1n_2}, \qquad \widehat{n_2} n_1 = \widehat{n_2n_1}, \qquad n_1,n_2 \in M .
\end{align*}
A right $N$-module $\Hi$ is called \emph{separable} if its underlying Hilbert space is, and \emph{faithful} if $fn = 0$ for all $f \in \Hi$ implies $n=0$. 

\subsection{Center-valued von Neumann dimension}\label{subsec:center-valued}

Given a von Neumann subalgebra $B$ of $N$, a normal positive linear map $E \colon N \to B$ is said to be a \emph{conditional expectation} if
\begin{enumerate}
    \item $E(b_1 n b_2) = b_1 E (n) b_2$ for all $b_1,b_2 \in B$ and $n \in N$.
    \item $E(b) = b$ for all $b \in B$.
\end{enumerate}
There exists a unique such conditional expectation with the property that $\kappa|_B \circ E = \kappa$ and we denote it by $E_B^N$. It is faithful (see \cite[Proposition 2.36]{Ta02} or \cite[Chapter 9]{AnPo}).

Denote by $Z = Z(N)$ the center of $N$. Being a commutative separable von Neumann algebra with a finite trace, there exists a finite measure space $(X,\mu)$ such that $Z \cong L^\infty(X,\mu)$. We denote by $\widehat{Z}^+$ the set of measurable extended real-valued functions on $X$, up to almost everywhere equality.

A conditional expectation $E$ from $N$ to $Z$ such that
\[ E(n^*n) = E(nn^*), \quad n \in N, \]
is called a \emph{center-valued trace}. $N$ admits a unique center-valued trace, and it is given by $E_Z^N$.

Let $\Hi$ be a separable right $N$-module. By \cite[Proposition 2.1.2]{JoSu97} there exists a projection $p \in \mathcal{B}(\ell^2(\N)) \otimes N$ such that $\Hi = p (\ell^2(\N) \otimes L^2(N,\kappa))$. One defines the \emph{center-valued von Neumann dimension} \cite{Be04} of $\Hi$ to be
\begin{equation}
    \cdim(\Hi_N) = (\mathrm{Tr} \otimes E_Z^N)(p) \label{eq:center_valued_dimension}
\end{equation}
where $E_Z^{N}$ denotes the center-valued trace of $N$ and $\mathrm{Tr}$ denotes the trace on $\B(\ell^2(\N))$ (see also the coupling operator \cite[p.\ 339 Definition 3.9]{Ta02}). The center-valued von Neumann dimension is interpreted as an (equivalence class of a) measurable nonnegative extended real-valued function on the measure space $X$ underlying $Z$. Thus, $\cdim(\Hi_N)$ is bounded if and only it is bounded almost everywhere as a function on $X$.

The center-valued von Neumann dimension competely determines an inclusion of $M$-modules in the sense that an $M$-module $\mathcal{K}$ is isomorphic to a submodule of an $M$-module $\mathcal{H}$ if and only if $\cdim_M \mathcal{K} \leq \cdim_M \Hi$, see e.g.\ \cite{Be04} or \cite[Chapter III, §4]{Di81}.

\subsection{Finitely generated modules}

A right $N$-module is called \emph{finitely generated} if there exists $g_1, \ldots, g_k \in \Hi$ such that the set $\mathrm{span}_N \{ g_1, \ldots, g_k \} = \{ \sum_{j=1}^k g_j n_j : n_j \in N \}$ is dense in $\Hi$.

\begin{proposition}\label{prop:finitely-generated-modules}
A faithful right $N$-module $\Hi$ is finitely generated if and only if $\cdim(\Hi_N)$ is bounded. If this is the case, then the following hold:
\begin{enumerate}
    \item $\tilde{N} = \B(\Hi_N)$ is a finite von Neumann algebra, and its unique center-valued trace is characterized by
    \[  \cdim(\Hi_N) \cdot E_Z^{\tilde{N}}(TT^*) = E_Z^N(T^*T), \quad T \in \B(L^2(N,\kappa)_N,\Hi_N) . \]
    \item For every trace $\kappa$ on $N$ there is a unique trace $\tau$ on $\tilde{N}$ such that
    \[ \tau( TT^*) = \kappa(T^*T), \qquad T \in \B(L^2(N,\kappa)_N, \Hi_N) . \]
    \item We have that
    \[ \cdim(\Hi_N) \cdot \cdim(_{\tilde{N}}\Hi) = 1 . \]
\end{enumerate}
\end{proposition}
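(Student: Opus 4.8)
The plan is to prove Proposition~\ref{prop:finitely-generated-modules} by first setting up the standard dictionary between a finitely generated right $N$-module and a matrix algebra over $N$, then transporting the center-valued trace through that identification and verifying the three stated formulas by reduction to the ``amplified'' module $\ell^2(\{1,\dots,k\}) \otimes L^2(N,\kappa)$.

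First I would establish the finiteness criterion. By \cite[Proposition 2.1.2]{JoSu97} we have $\Hi = p(\ell^2(\N)\otimes L^2(N,\kappa))$ for a projection $p \in \B(\ell^2(\N))\otimes N$, so that $\cdim(\Hi_N) = (\mathrm{Tr}\otimes E_Z^N)(p)$. I would argue that faithfulness of $\Hi$ forces the central support of $p$ to be full, and then show $\cdim(\Hi_N)$ bounded is equivalent to $p$ being equivalent to a projection in $\mathrm{M}_k(N)$ for some finite $k$. The key point is that for a faithful module, boundedness of the $Z$-valued function $\cdim(\Hi_N)$ lets one dominate $p$ by a sum of $k$ copies of the identity in the center-valued sense, which by the comparison theorem (the last paragraph of \Cref{subsec:center-valued}) yields an embedding $\Hi \hookrightarrow \ell^2(\{1,\dots,k\})\otimes L^2(N,\kappa)$; conversely finite generation gives such an embedding directly. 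Thus we may assume $p \in \mathrm{M}_k(N)$ and $\tilde N = \B(\Hi_N) = p\,\mathrm{M}_k(N)\,p$, which is a finite von Neumann algebra since $\mathrm{M}_k(N)$ is.

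For (i) and (ii) the strategy is to compute on the amplified module $L^2(N,\kappa)^{\oplus k} = \ell^2(\{1,\dots,k\})\otimes L^2(N,\kappa)$, where maps $T \in \B(L^2(N,\kappa)_N, \Hi_N)$ are exactly columns $T = (T_1,\dots,T_k)^{t}$ with $T_j \in \B(L^2(N,\kappa)_N)\cong N$, so that $T^*T = \sum_j T_j^* T_j$ and $TT^* = (T_i T_j^*)_{ij} \in \mathrm{M}_k(N)$. The operator $\B(\Hi_N) = p\,\mathrm{M}_k(N)\,p$ carries the center-valued trace inherited from $\mathrm{M}_k(N)$, namely $E_Z^{\mathrm{M}_k(N)}((a_{ij})) = \sum_j E_Z^N(a_{jj})$ suitably renormalized. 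I would then verify the identity in (i) by a direct computation: applying $E_Z^N$ to $T^*T$ gives $\sum_j E_Z^N(T_j^* T_j)$, while applying the center-valued trace of $\tilde N$ to $TT^*$ produces the same sum weighted by $\cdim(\Hi_N)^{-1}$, the weight appearing precisely because the center-valued trace on $p\,\mathrm{M}_k(N)\,p$ is the restriction of the trace on $\mathrm{M}_k(N)$ renormalized so that $E_Z^{\tilde N}(p\text{-identity}) = 1$, and $(\mathrm{Tr}\otimes E_Z^N)(p) = \cdim(\Hi_N)$. The uniqueness of the scalar trace in (ii) then follows from the uniqueness of the center-valued trace once one checks that $\kappa(T^*T)$ defines a faithful normal trace on $\tilde N$ and composes with the center-valued trace correctly.

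For (iii) I would interpret $\cdim(_{\tilde N}\Hi)$ via the commutant picture: since $\tilde N = \B(\Hi_N) = p\,\mathrm{M}_k(N)\,p$ acting on $\Hi = p(L^2(N,\kappa)^{\oplus k})$, computing its center-valued dimension as a left module amounts to applying the center-valued trace of $\tilde N$ to the projection cutting $\Hi$ out of the standard form of $\tilde N$, and the product $\cdim(\Hi_N)\cdot\cdim(_{\tilde N}\Hi)$ collapses to $1$ by the same normalization that made (i) hold. Concretely this is the statement that the two coupling constants of a projection $p$ and its ``complement under the commutant'' multiply to one, which is the center-valued refinement of the classical coupling-constant identity. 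I expect the main obstacle to be (i): pinning down the exact constant $\cdim(\Hi_N)$ in the characterizing formula for $E_Z^{\tilde N}$ requires carefully tracking how the two center-valued traces (on $N$ and on $\tilde N$) are normalized through the projection $p$, since any miscount of the renormalization factor propagates into both the alignment condition of \Cref{def:alignment} and the final estimate \eqref{eq:bounded-estimate}; getting the direction and placement of the $\cdim(\Hi_N)$ factor exactly right is the delicate part, whereas the remaining algebra is routine once the identification $\tilde N = p\,\mathrm{M}_k(N)\,p$ is in place.
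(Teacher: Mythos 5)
Your proposal is correct, and for the one part of the statement that the paper actually argues (finite generation $\Leftrightarrow$ boundedness of $\cdim(\Hi_N)$, via the comparison theorem for center-valued dimensions) you take the same route. For (i)--(iii) the paper simply cites Takesaki and the Anantharaman--Popa notes, whereas you propose to unpack those citations through the standard identification $\Hi = pL^2(N,\kappa)^k$, $\tilde N = p\,\mathrm{M}_k(N)\,p$; your computation there is right, and in particular the normalization works out exactly as you suspect: $E_Z^{\tilde N}(a) = E_Z^{\mathrm{M}_k(N)}(a)/E_Z^{\mathrm{M}_k(N)}(p)$ with $E_Z^{\mathrm{M}_k(N)}(p) = \tfrac1k(\mathrm{Tr}\otimes E_Z^N)(p) = \tfrac1k\cdim(\Hi_N)$, which produces the factor $\cdim(\Hi_N)$ on the correct side of the identity in (i). Two small points you should make explicit if you write this up: the word ``characterized'' in (i) requires observing that operators of the form $TS^*$ span $\tilde N$ ultraweakly densely (they do, since $TS^* = pE_{ij}(n_1n_2^*)p$ for column operators supported in single coordinates), and for (ii) the cleanest existence argument is to set $\tau = \kappa|_Z\circ\bigl(\cdim(\Hi_N)\cdot E_Z^{\tilde N}\bigr)$, which uses boundedness of $\cdim(\Hi_N)$. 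Part (iii) is the least developed step of your sketch --- ``collapses by the same normalization'' does need the separate computation of $\cdim(_{\tilde N}\Hi)$ in the commutant picture --- but that is precisely the coupling identity the paper also delegates to \cite[Proposition 3.10 (i)]{Ta02}, so nothing essential is missing.
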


\begin{proof}
Since the center-valued von Neumann dimension determines inclusion of right $N$-modules, we have that $\Hi_N$ is a submodule of $L^2(N,\kappa)^k$ if and only if $\cdim(\Hi_N) \leq \cdim(L^2(N,\kappa)^k) = k$. This shows that $\Hi_N$ is finitely generated if and only if $\cdim(\Hi_N)$ is bounded.

(i): The formula relating $E_Z^{\tilde{N}}$ and $E_Z^N$ follows from \cite[Theorem 3.8]{Ta02}, see also \cite[Section 9.3]{AnPo}.

(ii): The existence and uniqueness of $\tau$ follows from (i), see also \cite[Proposition 8.4.2]{AnPo}.

(iii): See \cite[Proposition 3.10 (i)]{Ta02}.
\end{proof}

\subsection{Restriction to a subalgebra}

This subsection deals with the case where $B$ is a von Neumann subalgebra of $N$. Given a trace $\kappa$ on $N$, the GNS space $L^2(N,\kappa)$ can be considered both a right $N$-module and a right $B$-module. We denote by $e_B \colon L^2(N,\kappa) \to L^2(N,\kappa)$ the projection onto the closed subspace $L^2(B,\kappa) \subseteq L^2(N,\kappa)$. This projection has the following relation to the conditional expectation $E_B^N$:
\begin{equation}
    e_B n e_B = E_B^N(n) e_B , \quad n \in N .
\end{equation}
The following lemma gives some essential information about the module $L^2(N,\kappa)_B$.

\begin{lemma}\label{lem:essential_lemma}
Let $(N,\tau)$ be a tracial von Neumann algebra and let $B \subseteq N$ be a von Neumann subalgebra such that $Z = Z(B) = Z(N)$. Denote by $e_B \colon L^2(N,\kappa) \to L^2(N,\kappa)$ the projection onto the closed subspace $L^2(B)$. Assume that $L^2(N,\kappa)_B$ is finitely generated. Then the following hold:
\begin{enumerate}
    \item The von Neumann algebra $\tilde{B} = \B(L^2(N,\kappa)_B)$ coincides with the von Neumann subalgebra of $\B(L^2(N,\kappa))$ generated by $N$ and $e_B$.
    \item The set
    \[ N e_B N = \{ n_1 e_B n_2 : n_1, n_2 \in N \} \]
    is ultraweakly dense in $\tilde{B}$, and the center-valued trace $E_Z^{\tilde{B}}$ on $\tilde{B}$ is determined by
    \[ \cdim(L^2(N,\kappa)_B) \cdot E_Z^{\tilde{B}}(n_1 e_B n_2) = E_Z^N (n_1 n_2), \quad n_1, n_2 \in N, \]
    where $E_Z^N$ denotes the unique center-valued tracial on $N$.
    \item For every $a \in \tilde{B}$ there exists a unique $n \in N$ such that $a e_B = n e_B$, and it is given by
\[ n = \cdim(L^2(N,\kappa)_B) \cdot E_N^{\tilde{B}}(ae_B) . \]
\end{enumerate}
\end{lemma}

\begin{proof}
For a proof of (i) see e.g. \cite[Proposition 9.4.2]{AnPo}.

For (ii), the ultraweak density of $Ne_BN$ in $\tilde{B}$ is also proved in \cite[Proposition 9.4.2 (6)]{AnPo}. Given $n_1, n_2 \in N$ we consider the bounded $B$-linear operators $T,S \colon L^2(B,\kappa) \to L^2(N,\kappa)$ given on the dense subset $\widehat{B}$ by $T(\widehat{b}) = \widehat{n_1b}$ and $S(\widehat{b}) = \widehat{n_2b}$ for $b \in B$. Then the adjoints are given by $T^*(\widehat{n}) = e_B(\widehat{n_1^*n})$ and $S^*(\widehat{n}) = e_B(\widehat{n_2^*n})$ for $n \in N$. Hence
\begin{align*}
    T^* S = E_B^N(n_1^* n_2), && ST^* = n_1 e_B n_2^* .
\end{align*}
Setting $z = \cdim(L^2(N,\kappa)_B)$, \Cref{prop:finitely-generated-modules} (i) then gives that
\begin{equation}
    z E_Z^{\tilde{B}}(n_1 e_B n_2^*) = z E_Z^{\tilde{B}}( ST^* ) = E_Z^B ( T^*S) = E_Z^B (E_B^N (n_1^* n_2)) = E_Z^N(n_1^* n_2) .
\end{equation}
Hence by normality and density the above formula completely determines $E_Z^{\tilde{B}}$.

For part (iii), note that
\[ b E_N^{\tilde{B}}(e_B) = E_N^{\tilde{B}}(be_B) = E_N^{\tilde{B}}(e_B b) = E_N^{\tilde{B}}(e_B) b, \qquad b \in B. \]
Hence $E_N^{\tilde{B}}(e_B) \in Z(B) = Z(N)$, so using part (ii) we get that
\[ z E_N^{\tilde{B}}(e_B) = z E_Z^{N}(E_N^{\tilde{B}}(e_B)) = z E_Z^{\tilde{B}}(e_B) = E_Z^N(1) = 1. \]
Now let $a \in \tilde{B}$ suppose that $ae_B = ne_B$ for some $n \in N$. Then
\[ zE_N^{\tilde{B}}(ae_B) = zE_N^{\tilde{B}}(ne_B) = n z E_N^{\tilde{B}}(e_B) = n, \]
which shows the required uniqueness. For existence, suppose first that $a = n_1 e_B n_2$ for some $n_1, n_2 \in N$. Then
\[ ae_B = n_1 e_B n_2 e_B = n_1 E_B^N(n_2) e_B. \]
Since $n_1 E_B^N(n_2) \in N$ we conclude by the established uniqueness that $n_1 E_B^N(n_2) = z E_N^{\tilde{B}}(ae_B)$. More generally we can, using part (i), write $a \in \tilde{B}$ as an ultraweak limit of a net $a_i$ of elements from $Ne_BN$. As we have already shown $a_i e_B = z E_N^{\tilde{B}}(a_i e_B)e_B$. Since multiplication is separately ultraweakly continuous and $E_N^{\tilde{B}}$ is normal, it follows that
\[ a e_B = \lim_i (a_i e_B) = \lim_i (z E_N^{\tilde{B}}(a_i e_B)e_B) = z E_N^{\tilde{B}}(\lim_i a_i e_B)e_B = z E_N^{\tilde{B}}(a e_B)e_B . \]
This finishes the proof.
\end{proof}

If $\Hi$ is a right $N$-module, it may also be considered a right $B$-module, and the following proposition relates the center-valued von Neumann dimensions of these two modules given that the centers of $N$ and $B$ coincide.

\begin{proposition}\label{prop:coefficient-change}
If $\Hi$ is a faithful, finitely generated right $N$-module and $B \subseteq N$ is a von Neumann subalgebra such that $Z(B) = Z(N)$, then $\Hi_B$ is finitely generated if and only if $L^2(N,\kappa)_B$ is finitely generated, and in that case
    \[ \cdim({\Hi_B}) = \cdim({L^2(N,\kappa)}_B) \cdot \cdim({\Hi_N}) . \]
\end{proposition}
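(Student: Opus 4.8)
The plan is to reduce the center-valued dimension of $\Hi_B$ to that of $\Hi_N$ by inserting a copy of $L^2(N,\kappa)_B$ via the multiplicativity of center-valued dimension under iterated module constructions. First I would use the defining realization from \Cref{subsec:center-valued}: since $\Hi$ is a faithful, finitely generated right $N$-module, \Cref{prop:finitely-generated-modules} tells us $\cdim(\Hi_N)$ is bounded, and there is a projection $p \in \B(\ell^2(\N)) \otimes N$ with $\Hi \cong p(\ell^2(\N) \otimes L^2(N,\kappa))$ as right $N$-modules. The key observation is that this identification is simultaneously an identification of right $B$-modules, because the right $B$-action is the restriction of the right $N$-action and $p$ is also $B$-linear (as $B \subseteq N$). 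Thus the question of whether $\Hi_B$ is finitely generated is controlled entirely by whether $L^2(N,\kappa)_B$ is, and computing $\cdim(\Hi_B)$ amounts to applying the trace formula \eqref{eq:center_valued_dimension} with $B$ in place of $N$ to the same projection $p$.

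The heart of the matter is the interaction of the two center-valued traces $E_Z^N$ and $E_Z^{\tilde B}$ (where $\tilde B = \B(L^2(N,\kappa)_B)$), and this is exactly where \Cref{lem:essential_lemma} enters. The hypothesis $Z(B) = Z(N) = Z$ is essential: it guarantees that both center-valued dimensions take values in the \emph{same} center $\widehat{Z}^+$, so that the claimed product formula even makes sense as an identity of functions on $X$. I would set $z = \cdim(L^2(N,\kappa)_B)$ and show that for the center-valued trace on $N$, composing with the conditional expectation structure relating $N$-linearity and $B$-linearity yields $E_Z^N = z \cdot E_Z^{\tilde B}|_N$ in the appropriate sense — this is precisely the content of \Cref{lem:essential_lemma}(ii), which pins down $E_Z^{\tilde B}$ by $z\, E_Z^{\tilde B}(n_1 e_B n_2) = E_Z^N(n_1 n_2)$. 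The multiplicativity statement then follows by tensoring with $\mathrm{Tr}$ on $\B(\ell^2(\N))$ and evaluating on $p$: one gets
\[
\cdim(\Hi_B) = (\mathrm{Tr} \otimes E_Z^B)(p) = z \cdot (\mathrm{Tr} \otimes E_Z^N)(p) = \cdim(L^2(N,\kappa)_B) \cdot \cdim(\Hi_N),
\]
where the middle equality is the center-valued version of the composition rule for traces under the inclusion $B \subseteq N$ (the tower law $E_Z^N = E_Z^B \circ E_B^N$ combined with the scaling $z$).

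The main obstacle I anticipate is making the scaling factor $z$ appear correctly and uniformly across the direct sum $\ell^2(\N) \otimes L^2(N,\kappa)$. The formula in \Cref{lem:essential_lemma}(ii) is stated for the algebra $\tilde B$ acting on a single copy of $L^2(N,\kappa)$, and one must verify that amplifying by $\B(\ell^2(\N))$ (i.e.\ passing from $N$ to $\mathrm{M}_\infty(N)$-type algebras) preserves both the center $Z$ and the scaling constant $z$ — that $z$ does not depend on which copy one is in. This is where separability and the stability of center-valued traces under amplification are used; concretely, $z$ is a central element, hence unaffected by the matrix amplification, and the conditional expectation $E_B^N$ amplifies diagonally. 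Once this compatibility is checked, the equivalence of finite generation for $\Hi_B$ and $L^2(N,\kappa)_B$ falls out of \Cref{prop:finitely-generated-modules}: $\cdim(\Hi_B)$ is bounded if and only if $z \cdot \cdim(\Hi_N)$ is, and since $\cdim(\Hi_N)$ is already bounded (and bounded below away from $0$ on the support by faithfulness), this happens exactly when $z = \cdim(L^2(N,\kappa)_B)$ is bounded.
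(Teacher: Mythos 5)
There is a genuine gap at the central step. You write $\cdim(\Hi_B) = (\mathrm{Tr} \otimes E_Z^B)(p)$, but formula \eqref{eq:center_valued_dimension} computes $\cdim(\Hi_B)$ from a projection $q \in \B(\ell^2(\N)) \otimes B$ realizing $\Hi$ inside $\ell^2(\N) \otimes L^2(B,\kappa)$ --- not from the projection $p \in \B(\ell^2(\N)) \otimes N$ acting on $\ell^2(\N) \otimes L^2(N,\kappa)$. Indeed $E_Z^B$ is only defined on $B$, so $(\mathrm{Tr} \otimes E_Z^B)(p)$ is not even meaningful for $p$ with entries in $N$; if you interpret it as $(\mathrm{Tr} \otimes E_Z^B \circ E_B^N)(p)$, the tower law gives $E_Z^B \circ E_B^N = E_Z^N$ \emph{exactly}, with no factor of $z$, and your chain of equalities would collapse to $\cdim(\Hi_B) = \cdim(\Hi_N)$. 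The factor $z = \cdim(L^2(N,\kappa)_B)$ does not come from any relation between $E_Z^B$ and $E_Z^N$; it comes from the fact that $L^2(N,\kappa)$ viewed as a $B$-module is ``$z$ times'' the standard module $L^2(B,\kappa)$. The ingredient you are missing is the compression formula for the center-valued dimension: for a projection $p$ in the commutant $\B(\mathcal{K}_B)$ one has $\cdim((p\mathcal{K})_B) = E_Z^{\B(\mathcal{K}_B)}(p) \cdot \cdim(\mathcal{K}_B)$. The paper applies this ([Ta02, Proposition 3.10(ii)]) with $\mathcal{K} = L^2(N,\kappa)^k$, which produces $k\,E_Z^{\tilde B}(p)\, z$, and then uses the hypothesis $Z(B) = Z(N)$ in an essential way to identify $E_Z^{\tilde B}(p)$ with the center-valued trace of $p$ computed in $\mathrm{M}_k(N)$ (via the inclusion $p\mathrm{M}_k(N)p \subseteq \tilde B$ with equal centers), thereby recovering $\cdim(\Hi_N)$. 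In your write-up the hypothesis $Z(B)=Z(N)$ is only used to say the two dimensions live in the same center, which is not where its real work is done, and \Cref{lem:essential_lemma}(ii) (which concerns $n_1 e_B n_2$ and the Jones-type projection) is not actually the statement needed here.

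Two smaller problems. First, your derivation of the ``finitely generated iff'' equivalence from the product formula is circular and incomplete: \Cref{lem:essential_lemma} presupposes that $L^2(N,\kappa)_B$ is finitely generated, so it cannot be invoked to decide when that holds; and your inference that boundedness of $z \cdot \cdim(\Hi_N)$ forces boundedness of $z$ requires $\cdim(\Hi_N)$ to be bounded \emph{below} away from $0$, which faithfulness does not give (it only gives full central support, and $\cdim(\Hi_N)$ can still decay to $0$ on the center). The paper instead proves the equivalence directly, before the formula, by mutually embedding $\Hi$ and $L^2(N,\kappa)$ into finite amplifications of one another as faithful finitely generated $N$-modules, which yields two-sided bounds $\cdim(\mathcal{K}_B) \leq k\,\cdim(\mathcal{K}'_B)$.
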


\begin{proof}
If $\mathcal{K}$ and $\mathcal{K}'$ are faithful finitely generated $N$-modules, then by \cite[Proposition 2.1.2]{JoSu97} $\mathcal{K}$ is isomorphic to a submodule of $\mathcal{K}' \otimes \C^k$ for some $k \in \N$. But then $\cdim(\mathcal{K}_B) \leq \cdim((\mathcal{K}' \otimes \C^k)_B) = k \cdim(\mathcal{K}'_B)$, so reversing the roles of $\mathcal{K}$ and $\mathcal{K}
'$, this shows that $\cdim(\mathcal{K}_B)$ is bounded if and only if $\cdim(\mathcal{K}'_N)$ is bounded. We may apply the above argument to $\mathcal{K} = \Hi$, $\mathcal{K}' = L^2(N,\kappa)$, and use \Cref{prop:finitely-generated-modules} to deduce that $\Hi_B$ is finitely generated if and only if $L^2(N,\kappa)_B$ is.

Denote by $Z$ the common center of $N$ and $B$. By \cite[Proposition 2.1.2]{JoSu97} we may assume that $\Hi_N =  p L^2(N)^k$ for some projection $p \in \mathrm{M}_k(N)$ which has central support equal to $1$ since $\Hi_N$ is faithful. Letting $\tilde{B}$ denote the commutant of $B$ in $\B(\Hi)$, \cite[Proposition 3.10 (ii)]{Ta02} gives
\begin{align*}
    \cdim(\Hi_B) = E_Z^{\tilde{B}}(p) \cdim(L^2(N,\kappa)^k_B) = k E_Z^{\tilde{B}}(p) \cdim(L^2(N,\kappa)_B).
\end{align*}
Now $\cdim(\Hi) = (\mathrm{Tr} \otimes E_Z^N)(p) = k E_Z^{p\mathrm{M}_k(N)p}(p)$. Since $p \mathrm{M}_k(N)p \subseteq \tilde{B}$ and $Z(p \mathrm{M}_k(N)p) = Z = Z(\tilde{B})$, we deduce that $E_Z^{p\mathrm{M}_k(N)p}(p) = E_Z^{\tilde{B}}(p)$ which this finishes the proof.
\end{proof}

\subsection{Bounded vectors}\label{subsec:bounded}

A vector $f$ in a left $M$-bimodule $\Hi$ is \emph{right $\tau$-bounded} if there exists $C > 0$ such that
\[ \| m f \| \leq C \tau(mm^*)^{1/2} , \qquad m \in M . \]
If $\Hi$ if a right $N$-module, a vector $f \in \Hi$ is called \emph{left $\kappa$-bounded} if there exists $C > 0$ such that
\[ \| f n \| \leq C \kappa(nn^*)^{1/2}, \qquad n \in N. \]
In an $M$-$N$-bimodule one has notions of both right $\tau$-bounded vectors and left $\kappa$-bounded vectors. If $f \in \Hi$ is right $\tau$-bounded, then the assignment $\widehat{M} \to \Hi$, $\widehat{m} \mapsto mf$ extends to a bounded linear operator $\rbdd{f} \colon L^2(M,\tau) \to \Hi$. Similarly, if $f$ is left $\kappa$-bounded, the assignment $\widehat{N} \to \Hi$, $\widehat{n} \mapsto fn$ extends to a bounded linear operator $\lbdd{f} \colon L^2(N,\kappa) \to \Hi$.

Note that the property of being left or right bounded depends on the chosen traces on $M$ and $N$, so to meaningfully have a relation between the two notions, we must impose some compatibility between $\tau$ and $\kappa$. We thus make the following definition:

\begin{definition}\label{def:alignment}
We say that $\tau$ and $\kappa$ are \emph{aligned on $\Hi$} if whenever $T \colon L^2(N, \kappa) \to \Hi$ is bounded and $N$-linear and $TT^* \in M$, then
\[ \tau(TT^*) = \kappa(T^*T) . \]
Equivalently, whenever $S \colon L^2(M,\tau) \to \Hi$ is bounded and $M$-linear and $SS^* \in N^{\text{op}}$, then
\[ \tau'(S^*S) = \kappa(SS^*). \]
\end{definition}

Note that according to this definition, the traces $\tau$ on $\tilde{N}$ and $\kappa$ on $N$ in \Cref{prop:finitely-generated-modules} (ii) are aligned on $\Hi$. One may then obtain a trace on $M$ aligned with $\kappa$ on $N$ by restricting $\tau$ to $M$.

\begin{lemma}\label{lem:bounded-commutant-module}
Let $\Hi$ be a finitely generated right $N$-module and let $\tilde{N} = \B(\Hi_N)$. For a trace $\kappa$ on $N$ and the corresponding trace $\tau$ on $\tilde{N}$ from \Cref{prop:finitely-generated-modules} (ii), the left $\kappa$-bounded vectors and the right $\tau$-bounded vectors coincide. Specifically,
\[ \| \mathrm{L}_f \| = \| \mathrm{R}_f \| \]
for all left bounded (equivalently right bounded) $f \in \Hi$.
\end{lemma}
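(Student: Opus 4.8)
The plan is to prove the two norm inequalities $\| \rbdd{f} \| \leq \| \lbdd{f} \|$ and $\| \lbdd{f} \| \leq \| \rbdd{f} \|$ separately; since each of these bounds simultaneously certifies the corresponding boundedness, together they give both the equality of norms and the coincidence of the two classes of vectors. The only nontrivial input is the trace-compatibility relation $\tau(TT^*) = \kappa(T^*T)$ for bounded $N$-linear $T \colon L^2(N,\kappa) \to \Hi$, which holds by \Cref{prop:finitely-generated-modules} (ii) (equivalently, because $\tau$ and $\kappa$ are aligned on $\Hi$ as noted after \Cref{def:alignment}). I would also use throughout the identification $\B(L^2(N,\kappa)_N) = N$ by left multiplication, the equality $\tilde{N} = \B(\Hi_N)$, and the fact that a composite of bounded $N$-linear maps is again $N$-linear. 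The starting point is the observation that $f = \lbdd{f}\,\widehat{1_N}$ whenever $f$ is left $\kappa$-bounded, and symmetrically $f = \rbdd{f}\,\widehat{1_{\tilde{N}}}$ whenever $f$ is right $\tau$-bounded.

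For the first inequality I would assume $f$ left $\kappa$-bounded and fix $a \in \tilde{N}$. As $a$ is $N$-linear, the operator $T := a\lbdd{f} \colon L^2(N,\kappa) \to \Hi$ is bounded and $N$-linear, so $T^*T \in \B(L^2(N,\kappa)_N) = N$ and $TT^* \in \B(\Hi_N) = \tilde{N}$. Writing $T^*T$ as left multiplication by $b \in N$ and evaluating at $\widehat{1_N}$ gives
\[
\|af\|^2 = \|T\,\widehat{1_N}\|^2 = \langle T^*T\,\widehat{1_N}, \widehat{1_N}\rangle = \langle \widehat{b}, \widehat{1_N}\rangle = \kappa(b) = \kappa(T^*T).
\]
The trace-compatibility relation then converts this into a $\tau$-quantity,
\[
\|af\|^2 = \kappa(T^*T) = \tau(TT^*) = \tau(aPa^*), \qquad P := \lbdd{f}\lbdd{f}^* \in \tilde{N}, \ P \geq 0,
\]
and since $P \leq \|P\|\,1$ with $\tau$ positive and tracial, $\tau(aPa^*) \leq \|P\|\,\tau(aa^*) = \|\lbdd{f}\|^2\,\tau(aa^*)$. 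Thus $\|af\| \leq \|\lbdd{f}\|\,\tau(aa^*)^{1/2}$ for all $a \in \tilde{N}$, which shows $f$ is right $\tau$-bounded with $\|\rbdd{f}\| \leq \|\lbdd{f}\|$.

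The reverse inequality I would obtain by the symmetric version of the same argument. Here I use that, since $\Hi$ is faithful, $\rho(N^{\mathrm{op}})$ is a von Neumann subalgebra of $\B(\Hi)$ with $\tilde{N} = \rho(N^{\mathrm{op}})'$, so by the bicommutant theorem $\B(_{\tilde{N}}\Hi) = \tilde{N}' = \rho(N^{\mathrm{op}})$; hence $\Hi$ is an $\tilde{N}$-$N$-bimodule in which the two algebras are each other's commutants and the roles of $(\tilde{N},\tau)$ and $(N,\kappa)$ are interchangeable. Given $f$ right $\tau$-bounded and $n \in N$, I would set $S := \rho(n^{\mathrm{op}})\rbdd{f} \colon L^2(\tilde{N},\tau) \to \Hi$, which is bounded and $\tilde{N}$-linear because $\rho(n^{\mathrm{op}}) \in \tilde{N}'$; then $SS^* \in \tilde{N}' = \rho(N^{\mathrm{op}})$ while $S^*S$ lies in the commutant of the left $\tilde{N}$-action on $L^2(\tilde{N},\tau)$. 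Applying the second (equivalent) formulation of alignment in \Cref{def:alignment}, namely $\tau'(S^*S) = \kappa(SS^*)$, and unwinding the identifications exactly as above yields $\|fn\|^2 \leq \|\rbdd{f}\|^2\,\kappa(nn^*)$, hence $\|\lbdd{f}\| \leq \|\rbdd{f}\|$.

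I expect the only real obstacle to be the bookkeeping in this last step: one must track precisely which algebra each of $T^*T$, $TT^*$, $S^*S$, $SS^*$ belongs to (so that the trace-compatibility and alignment relations are applicable), and handle the opposite-algebra and modular-conjugation conventions entering the $\tilde{N}$-side trace $\tau'$. The central identity $\|af\|^2 = \tau(aPa^*)$ is short, and once the commutant identifications $\tilde{N} = \rho(N^{\mathrm{op}})'$ and $\tilde{N}' = \rho(N^{\mathrm{op}})$ are in place the two directions become genuinely symmetric.
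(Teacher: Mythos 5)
Your route is genuinely different from the paper's. The paper works in the concrete model $\Hi_N = pL^2(N,\kappa)^k$, uses a Pimsner--Popa basis to show that a right $\tau$-bounded vector has all components in $p\widehat{N}^k$, identifies $\tau$ explicitly as $(\kappa \otimes \mathrm{Tr})|_{p\mathrm{M}_k(N)p}$, and then computes both $\| \lbdd{f} \|$ and $\| \rbdd{f} \|$ directly, finding each equal to $\| \sum_j n_j^* n_j \|^{1/2}$. Your first direction is instead a short abstract argument, and it is correct as written: for $a \in \tilde{N}$ the operator $T = a\lbdd{f}$ is $N$-linear, so \Cref{prop:finitely-generated-modules}~(ii) gives
\[ \| a f \|^2 = \kappa(T^*T) = \tau(TT^*) = \tau\big(a \lbdd{f}\lbdd{f}^* a^*\big) \leq \| \lbdd{f} \|^2 \, \tau(aa^*), \]
which yields $\| \rbdd{f} \| \leq \| \lbdd{f} \|$ using only the defining property of $\tau$ and its positivity. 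This is arguably cleaner than the paper's computation for that half.

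The reverse inequality, however, has a gap. The relation you invoke there, $\tau'(S^*S) = \kappa(SS^*)$ for $\tilde{N}$-linear $S \colon L^2(\tilde{N},\tau) \to \Hi$, is not available at this point: \Cref{prop:finitely-generated-modules}~(ii), applied to the left $\tilde{N}$-module $\Hi$, produces a unique trace $\kappa'$ on $\B(_{\tilde{N}}\Hi) = \rho(N^{\mathrm{op}})$ satisfying $\kappa'(SS^*) = \tau'(S^*S)$, and what you need is the nontrivial fact that $\kappa' = \kappa$, i.e.\ that the correspondence $\kappa \mapsto \tau$ is involutive. The ``Equivalently'' clause of \Cref{def:alignment} is asserted in the paper without proof, and in the present commutant situation that equivalence is essentially the left--right symmetry you are trying to establish, so appealing to it is close to circular. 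To close the gap you must either prove the involutivity (which again runs through identifying the left and right bounded vectors, or through polarization over vectors already known to be bounded on both sides) or do what the paper does: pass to the model $pL^2(N,\kappa)^k$, pin down $\tau$ concretely as $\kappa \otimes \mathrm{Tr}$, and verify the reverse bound by direct computation. A minor additional point: the lemma does not assume $\Hi_N$ faithful, so before writing $\tilde{N}' = \rho(N^{\mathrm{op}})$ and treating $\kappa$ as a trace on that commutant you should replace $N$ by $\rho(N^{\mathrm{op}})^{\mathrm{op}}$ as in \Cref{rmk:faithful}.
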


\begin{proof}
By \cite[Proposition 2.1.2]{JoSu97} we may assume that $\Hi_N = pL^2(N,\kappa)^k$ for $p$ a projection in $\mathrm{M}_k(N)$. Then $\tilde{N} = p \mathrm{M}_k(N) p \subseteq \B(pL^2(N,\kappa)^k)$.

Let $f = (f_1, \ldots, f_k) \in \Hi$. If $f$ is left $\kappa$-bounded, then in particular each $f_i$ is a left $\kappa$-bounded element of $L^2(N,\kappa)$, hence in $\widehat{N}$ (see \cite[Theorem 1.2.4]{JoSu97}). On the other hand, suppose that $f$ is right $\tau$-bounded. Using a Pimsner--Popa basis, see \cite{PiPo86} or \cite[Lemma 8.4.8]{AnPo}, we may pick a partial isometry $v \in M_k(N)$ such that $v^*v = p$ and $vv^*=q$ is a diagonal matrix with projections $p_1, \ldots, p_r \in N$ on the diagonal. Then $p M_k(N) p \cong q M_k(N) q$ and $pL^2(N)^k \cong qL^2(N)^k = p_1 L^2(N) \oplus \cdots \oplus p_k L^2(N)$, so it suffices to prove the boundedness in this case. Let $n \in p_iNp_i$ and take $A \in qM_k(N)q$ to be $A_{i,i} = n$ and $A_{i,j} = 0$ otherwise. Then $A f = (0, \ldots, n f_i, \ldots, 0)$, so left $\tau$-boundedness of $f$ gives
\[ \| n f_i \|^2 \leq \tau(A^*A) = \kappa(n^*n), \quad n \in p_i N p_i .  \]
This means that $f_i \in p_i L^2(N)$ is right bounded for the left action of $p_i N p_i$, hence we must have $f_i = \widehat{n_i} \in p_i \widehat{N}$. Thus $f \in p \widehat{N}^k$. This shows that the right and left bounded vectors agree.

Next we characterize $\tau$. Let $f = (\widehat{n}_1, \ldots, \widehat{n}_k) \in p \widehat{N}^k$ and set $T = \lbdd{f}$. Then $TT^*$ is the matrix $( n_i n_j^* )_{i,j=1}^k \in p \mathrm{M}_k(N) p$, and
\[ \tau(TT^*) = \kappa(T^*T) = \sum_{j=1}^k \kappa(n_j^* n_j) . \]
Writing a general element in $p \mathrm{M}_k(N) p$ as a sum of elements of the form $TS^*$ where $T$ and $S$ are of the form above and using polarization, this shows that $\tau$ is given by $\kappa \otimes \mathrm{Tr}$.

We now compute for $f = ( \hat{n}_1, \ldots, \hat{n}_k) \in \Hi$ that
\begin{align*}
    \| f n \|^2 &= \sum_{j=1}^k \kappa( (n_k n) (n_k n)^* ) = \kappa \big( n n^* \sum_{j=1}^k n_j^* n_j \big) = \kappa \big( \big( \sum_{j=1}^k n_j^* n_j \big)^{1/2} n n^*) .
\end{align*}
Hence $\| \lbdd{f} \|$ is the operator norm of the map $L^2(N,\kappa) \to L^2(N,\kappa)$ given by left multiplication with $\big( \sum_{j=1}^k n_j^* n_j \big)^{1/2}$, so
\[ \| \lbdd{f} \| = \Big\| \sum_{j=1}^k n_j^* n_j \Big\|^{1/2} . \]
On the other hand, the adjoint of $\rbdd{f}$ is given by $\rbdd{f}^*(g) = A = ( \widehat{n_i' n_j^*} )_{i,j=1}^k$ for $g = (\hat{n}_1', \ldots, \hat{n}_k') \in p \widehat{N}^k$. Thus
\begin{align*}
    \| \rbdd{f}(g) \|^2 &= \tau \big(AA^*) \\
    &= \sum_{i=1}^k \kappa \Big( \sum_{r=1}^k n_i' n_r^* n_r n_i'^* \Big) \\
    &= \kappa\Big( \big( \sum_{i=1}^k n_i'^* n_i \big) \big( \sum_{r=1}^k n_r^* n_r \big) \Big) .
\end{align*}
Then similarly we conclude that $\| \rbdd{f} \| = \| \sum_{r=1}^k n_r^* n_r \| = \| \lbdd{f} \|$.
\end{proof}

We now consider the case where $B$ is a von Neumann subalgebra of $N$ and $\Hi = L^2(N,\kappa)_N$. For a left $\kappa$-bounded vector $f \in \Hi_N$ we write $\lbdd{f}^N \colon L^2(N,\kappa) \to \Hi$ for the corresponding map, and if $f$ is left $\kappa|_B$-bounded in $\Hi_B$, we write $\lbdd{f}^B \colon L^2(B,\kappa) \to \Hi$.

\begin{lemma}\label{lem:bounded_vectors_subalgebra}
Let $B \subseteq N$ be a von Neumann subalgebra such that $Z(B) = Z(N)$ and such that $L^2(N,\kappa)_B$ is finitely generated. Then the left $\kappa$-bounded vectors of $L^2(N,\kappa)_N$ and the left $\kappa|_B$-bounded vectors of $L^2(N,\kappa)_B$ coincide. Specifically
\[ \| \lbdd{f}^N \| \leq \| \cdim(L^2(N,\kappa)_B) \| \cdot \| \lbdd{f}^B \| , \qquad f \in \Hi \; \text{left $\kappa|_B$-bounded}. \]
\end{lemma}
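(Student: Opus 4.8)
The plan is to route everything through $\tilde B = \B(L^2(N,\kappa)_B)$, which by \Cref{lem:essential_lemma} (i) is generated by $N$ and $e_B$ and in particular contains $N$ acting by left multiplication. Since $L^2(N,\kappa)_B$ is finitely generated, \Cref{prop:finitely-generated-modules} furnishes the trace $\tau$ on $\tilde B$ with $\tau(TT^*) = \kappa(T^*T)$ for bounded $B$-linear $T \colon L^2(B,\kappa) \to L^2(N,\kappa)$. Applying \Cref{lem:bounded-commutant-module} to the right $B$-module $L^2(N,\kappa)$ then identifies its left $\kappa|_B$-bounded vectors with the right $\tau$-bounded vectors of the left $\tilde B$-module $L^2(N,\kappa)$ and yields $\| \lbdd{f}^B \| = \| \rbdd{f}^{\tilde B} \|$, where $\rbdd{f}^{\tilde B}(\widehat a) = af$. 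The easy inclusion is immediate: a left $\kappa$-bounded vector of $L^2(N,\kappa)_N$ lies in $\widehat N$ by \cite[Theorem 1.2.4]{JoSu97}, and restricting the action from $N$ to $B$ exhibits it as left $\kappa|_B$-bounded with $\lbdd{f}^B = \lbdd{f}^N \circ \iota$ for the inclusion $\iota \colon L^2(B,\kappa) \hookrightarrow L^2(N,\kappa)$, so $\| \lbdd{f}^B \| \leq \| \lbdd{f}^N \|$. The substance is the reverse inclusion together with the quantitative estimate.

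The crux is to compare $\tau|_N$ with $\kappa$. Writing $z = \cdim(L^2(N,\kappa)_B)$ and $\lambda = \tau|_Z$, I would first observe that since $Z(\tilde B) = Z = Z(N)$, the restriction $E_Z^{\tilde B}|_N$ is a center-valued trace on $N$, hence equals $E_Z^N$ by uniqueness, and likewise $\tau = \lambda \circ E_Z^{\tilde B}$ while $\kappa = (\kappa|_Z) \circ E_Z^N$. Testing the identity $\tau(n e_B n^*) = \kappa(n^* n)$ — which follows from \Cref{prop:finitely-generated-modules} (ii) applied to $T = \lbdd{\widehat n}^B$, for which $TT^* = n e_B n^*$ and $T^*T = E_B^N(n^* n)$ — on a central element $n = w^{1/2}$ with $w \in Z^+$, together with $E_Z^{\tilde B}(e_B) = z^{-1}$ (the case $n_1 = n_2 = 1$ of \Cref{lem:essential_lemma} (ii)), yields $\lambda(z^{-1} w) = \kappa(w)$, that is, $\tau|_Z(v) = \kappa(zv)$ for $v \in Z$. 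Combining these gives, for $n \in N$,
\[ \tau(nn^*) = \lambda\big(E_Z^N(nn^*)\big) = \kappa\big(z\, E_Z^N(nn^*)\big) = \kappa(z\, nn^*) \leq \| z \| \, \kappa(nn^*), \]
where the inequality uses that $z$ is central with $0 \leq z \leq \| z \| \cdot 1$.

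With this in hand, let $f$ be left $\kappa|_B$-bounded, hence right $\tau$-bounded over $\tilde B$. Specialising $\| af \| \leq \| \rbdd{f}^{\tilde B} \| \, \tau(aa^*)^{1/2}$ to $a = n \in N \subseteq \tilde B$ and inserting the trace estimate gives $\| nf \| \leq \| z \|^{1/2} \| \rbdd{f}^{\tilde B} \| \, \kappa(nn^*)^{1/2}$. Thus $\widehat n \mapsto nf$ extends to a bounded operator on $L^2(N,\kappa)$ commuting with the left $N$-action, so it lies in the commutant $N' = JNJ$; evaluating at $\widehat 1$ shows $f = \widehat m$ for some $m \in N$, whence $f$ is left $\kappa$-bounded in $L^2(N,\kappa)_N$ with $\lbdd{f}^N$ equal to left multiplication by $m$. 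Since left and right multiplication by $m$ on $L^2(N,\kappa)$ share the norm $\| m \|$, the previous estimate reads $\| \lbdd{f}^N \| = \| m \| \leq \| z \|^{1/2} \| \rbdd{f}^{\tilde B} \| = \| z \|^{1/2} \| \lbdd{f}^B \|$. As $L^2(B,\kappa)_B \hookrightarrow L^2(N,\kappa)_B$ forces $\cdim(L^2(N,\kappa)_B) \geq 1$, we have $\| z \| \geq 1$ and $\| z \|^{1/2} \leq \| z \|$, which gives the claimed bound.

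The main obstacle is the trace identity $\tau(nn^*) = \kappa(z\, nn^*)$: it requires identifying $E_Z^{\tilde B}|_N$ with $E_Z^N$ through uniqueness of the center-valued trace, extracting $\tau|_Z = \kappa(z\, \cdot\,)$ from \Cref{lem:essential_lemma}, and then carefully tracking the interchange between left and right multiplication when the resulting bounded vector is identified with an element of $\widehat N$.
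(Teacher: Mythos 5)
Your proof is correct, but it takes a genuinely different (and longer) route than the paper's. The paper's argument is essentially four lines: it applies \Cref{lem:essential_lemma} (iii) directly to the operator $\lbdd{f}^B e_B \in \tilde{B}$ to produce the unique $n \in N$ with $\lbdd{f}^B e_B = n e_B$, evaluates at $\widehat{1}$ to get $f = \widehat{n}$, and reads off $\| n \| \leq \| \cdim(L^2(N,\kappa)_B) \| \cdot \| \lbdd{f}^B \|$ from the contractivity of $E_N^{\tilde{B}}$ and of $e_B$. You instead pass to the right $\tau$-bounded picture over $\tilde{B}$ via \Cref{lem:bounded-commutant-module}, prove the trace comparison $\tau(nn^*) = \kappa(z\, nn^*)$ by identifying $E_Z^{\tilde{B}}|_N$ with $E_Z^N$ through uniqueness of the center-valued trace and extracting $\tau|_Z = \kappa(z\,\cdot)$ from \Cref{lem:essential_lemma} (ii), and then recover $f \in \widehat{N}$ from the commutation theorem $N' = JNJ$ in standard form. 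Each step checks out: $Z(\tilde{B}) = Z$ does hold (since $L^2(N,\kappa)_B$ is a faithful $B$-module and $Z(B) = Z$), the identity $\tau(ne_Bn^*) = \kappa(n^*n)$ is exactly \Cref{prop:finitely-generated-modules} (ii) applied to left multiplication by $n$ restricted to $L^2(B,\kappa)$, and $z \geq 1$ follows from $L^2(B,\kappa)_B \subseteq L^2(N,\kappa)_B$. What your route buys is a sharper constant, $\| \lbdd{f}^N \| \leq \| z \|^{1/2} \| \lbdd{f}^B \|$ rather than $\| z \| \cdot \| \lbdd{f}^B \|$, plus the trace identity $\tau|_N = \kappa(z\,\cdot)$, which is of independent interest; what it costs is the detour through the Pimsner--Popa computation of \Cref{lem:bounded-commutant-module}, which the paper's argument avoids entirely.
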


\begin{proof}
Let $f \in L^2(N,\kappa)_B$ be left $\kappa|_B$-bounded and denote by $\lbdd{f}^B \colon L^2(B,\kappa) \to L^2(N,\kappa)$ the corresponding bounded operator. Then $\lbdd{f}^B e_B \in \tilde{B}$, so by \Cref{lem:essential_lemma} (iii) we have that $\lbdd{f}^B e_B = (\lbdd{f}^B e_B)e_B = ne_B$ where $n = \cdim(L^2(N,\kappa)_B) \cdot E_N^{\tilde{B}}( \lbdd{f}^B e_B)$. It follows that
\[ f = \lbdd{f}^B(\widehat{1}) = \lbdd{f}^B(e_B(\widehat{1})) = n \widehat{1} = \widehat{n}. \]
This shows that $f \in \widehat{N}$, i.e.,\ $f$ is left $\kappa$-bounded, and
\[ \| \lbdd{f}^N \| = \| n \| \leq \| \cdim(L^2(N,\kappa)_B) \| \cdot \| E_N^{\tilde{B}}( \lbdd{f}^B e_B) \| \leq \| \cdim(L^2(N,\kappa)_B) \| \| \lbdd{f}^B \| . \]
\end{proof}

We now prove the second main result of the introduction.

\begin{proof}[Proof of \Cref{thm:left-right-bounded}]
Since $\Hi$ is faithful both as a left $M$-module and as a right $N$-module, we may identify $M$ and $N^{\text{op}}$ with their images inside $\B(\Hi)$. We have that $M \subseteq \tilde{N} \coloneqq \B(\Hi_N)$ and $Z(M) = Z(N^{\text{op}}) = Z(\tilde{N})$. Since $\Hi_N$ is finitely generated, it follows that $_{\tilde{N}} \Hi$ is finitely generated. Since also $_M \Hi$ is finitely generated, we can apply \Cref{prop:coefficient-change} to obtain
\[ \cdim(_M \Hi) = \cdim(_M L^2(\tilde{N})) \cdot \cdim(_{\tilde{N}} \Hi) . \]
On the other hand \Cref{prop:finitely-generated-modules} (iii) gives that
\[  \cdim(_{\tilde{N}} \Hi) \cdot \cdim( \Hi_N) = 1. \]
Combining the two, we obtain
\[  \cdim(_M \Hi) \cdot \cdim(\Hi_N) = \cdim(_M L^2(\tilde{N})) . \]
In particular $\cdim(_M L^2(\tilde{N}))$ is bounded, hence $ _M L^2(\tilde{N})$ is finitely generated by \Cref{prop:finitely-generated-modules}. Equipping $\tilde{N}$ with the trace $\tilde{\tau}$ aligned with $\kappa$ from \Cref{lem:bounded-commutant-module}, the version of \Cref{lem:bounded_vectors_subalgebra} for left modules implies that the right $\kappa$-bounded vectors of $\Hi$ coincide with the right $\tilde{\tau}$-bounded vectors of $\Hi$, and that the corresponding operator norms of $f \in \Hi$ are related via
\begin{align*}
     \| \rbdd{f}^{\tilde{N}} \| &\leq \| \cdim(_M {L^2(\tilde{N})}) \| \cdot \| \rbdd{f}^{M} \| \\
     &= \| \cdim(_M \Hi) \cdot \cdim(\Hi_N) \| \cdot \| \rbdd{f}^M \| .
\end{align*}
Since $\tilde{\tau}|_{M} = \tau$ by assumption, \Cref{lem:bounded-commutant-module} implies that $\| \rbdd{f}^{\tilde{N}} \| = \| \lbdd{f}^{N} \|$, so this finishes the proof.
\end{proof}

\begin{remark}
If $M$ and $N$ are factors, then the assumption that their centers coincide in $\B(\Hi)$ holds automatically. They also have unique traces up to scaling. Thus, no assumption on the traces is needed for the mere statement that the left and bounded vectors coincide in this case, although to obtain the inequality \eqref{eq:bounded-estimate}, an assumption of alignment of the traces is needed.
\end{remark}

\begin{remark}\label{rmk:faithful}
We assume in \Cref{thm:left-right-bounded} that $\Hi$ is faithful both as a left $M$-module and as a right $N$-module. Without this assumption, one can do the following: Let $\pi \colon M \to \B(\Hi)$ and $\rho \colon N^{\mathrm{op}} \to \B(\Hi)$ denote the normal homomorphisms implementing the module actions. Consider the von Neumann algebras $M_0 = \pi(M)$ and $N_0 = \rho(N^{\mathrm{op}})^{\mathrm{op}}$. Then $\Hi$ is an $M_0$-$N_0$-bimodule which is faithful both as a left and right module, and $_M \Hi$ (resp.\ $\Hi_N$)  is finitely generated if and only if $_{M_0} \Hi$ (resp.\ $\Hi_{N_0}$) is finitely generated. If one assumes $Z(M_0) = Z(N_0)$ and aligned traces on $M_0$ and $N_0$, then \Cref{thm:left-right-bounded} gives that
\[ \| \rbdd{f} \| \leq \| \cdim(_{M_0} \Hi) \cdot \cdim(\Hi_{N_0}) \| \cdot \| \lbdd{f} \| . \]
\end{remark}

\section{Gabor systems}\label{sec:gabor}

Let $G$ denote a second-countable, locally compact abelian group with Pontryagin dual $\widehat{G}$. Fix a lattice (i.e.,\ discrete, compact subgroup) $\Delta$ in $G \times \widehat{G}$. The following defines a 2-cocycle $c$ on $G\times \widehat{G}$:
\[ c((x,\omega),(x',\omega')) = \overline{\omega'(x)}, \qquad (x,\omega),(x',\omega') \in G \times \widehat{G} . \]

We consider the twisted group von Neumann algebra $M = \vN(G,c)$. This is the von Neumann algebra generated by the $c$-projective left regular representation $\lambda_c \colon \Delta \to \mathcal{U}(\ell^2(\Delta))$ of $\Delta$ on $\ell^2(\Delta)$ given by
\[ \lambda_c(z) \delta_{z'} = c(z,z') \delta_{zz'}, \qquad z,z' \in \Delta. \]
We refer to \cite{Pa08} for a survey on 2-cocycles on groups and associated operator algebras.

The \emph{adjoint lattice} of $\Delta$ is the following subset of $G \times \widehat{G}$, which is itself a lattice in $G \times \widehat{G}$:
\begin{equation}
    \Delta^{\circ} = \{ z \in \Delta : \pi(z)\pi(w) = \pi(w)\pi(z) \text{ for all } w \in \Delta \} . \label{eq:adjoint-lattice}
\end{equation}
We consider also the von Neumann algebra $N = \vN(\Delta^{\circ}, c^{\mathrm{op}})$ where $c^{\mathrm{op}}(z,w) = c(w,z)$ for $z,w \in G \times \widehat{G}$.

Given $(x,\omega) \in G \times \widehat{G}$, define the unitary operator $\pi(x,\omega)$ on $L^2(G)$ via
\[ \pi(x,\omega)f(t) = \omega(t) f(x^{-1}t), \qquad f \in L^2(G), t \in G. \]

The following is proved in \cite[Theorem 2.16]{Ri88}, see also \cite[p.\ 336]{Be04}.

\begin{proposition}\label{prop:extension_gabor}
For any lattice $\Delta$ in $G \times \widehat{G}$ the map $\C(\Delta,c) \to \B(L^2(G))$ given by $\delta_z \mapsto \pi(z)$ for $z \in \Delta$ extends to a faithful normal representation $\pi_\Delta \colon \vN(\Delta,c) \to \B(L^2(G))$. Moreover, the commutant of $\pi_\Delta(\Delta)$ in $\B(L^2(G))$ is exactly the von Neumann algebra generated by $\pi_{\Delta^\circ}(\Delta^\circ)$.
\end{proposition}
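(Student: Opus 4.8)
The plan is to build both assertions on the single observation that $\pi$ is a $c$-projective unitary representation of the full group $G \times \widehat{G}$ on $L^2(G)$. A direct computation from the defining formula gives $\pi(z)\pi(w) = c(z,w)\,\pi(zw)$ and $\pi(z)^* = \overline{c(z,z^{-1})}\,\pi(z^{-1})$; restricting to $z,w \in \Delta$ shows that $\delta_z \mapsto \pi(z)$ intertwines the twisted convolution product and the involution, so it is a $*$-homomorphism $\C(\Delta,c) \to \B(L^2(G))$ agreeing with $\lambda_c(z) \mapsto \pi(z)$. Since $\Delta$ is a discrete, cocompact, hence amenable, abelian group, the full and reduced twisted group C*-algebras coincide, so this $*$-homomorphism is automatically bounded and factors through $C^*(\Delta,c)$. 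The content of assertion (1) is then to promote this to a \emph{faithful normal} representation of the von Neumann algebra $\vN(\Delta,c)$.

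For that I would exhibit a faithful normal trace on $\pi_\Delta(\Delta)'' = \{\pi(z) : z \in \Delta\}''$ pulling back to the canonical trace $\tau(\lambda_c(a)) = a_e$ on $\vN(\Delta,c)$. The analytic engine is the orthogonality relation for the Schrödinger representation restricted to $\Delta$, i.e.\ the fundamental identity of Gabor analysis, schematically
\[ \sum_{z\in\Delta}\langle f_1,\pi(z)g_1\rangle\,\overline{\langle f_2,\pi(z)g_2\rangle} = \covol(\Delta)^{-1}\sum_{w\in\Delta^\circ}\langle f_1,\pi(w)f_2\rangle\,\overline{\langle g_1,\pi(w)g_2\rangle} \]
for windows in Feichtinger's algebra, which displays $f \mapsto (\langle f,\pi(z)g\rangle)_{z}$ as a square-integrable coefficient field and lets one realize the GNS space $L^2(\vN(\Delta,c),\tau)$ inside a (possibly amplified) copy of $L^2(G)$. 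Choosing $g$ so that $\{\pi(z)g\}_{z\in\Delta}$ is a tight frame turns $\pi_\Delta$ into a trace-preserving $*$-isomorphism onto its image, whence normality and faithfulness follow at once; the factor $\covol(\Delta)$ governing the relation between the frame constant and $\|g\|_2^2$ is the source of the covolume normalizations appearing elsewhere in the paper.

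For assertion (2) one inclusion is immediate: by the very definition of $\Delta^\circ$, every $\pi(w)$ with $w \in \Delta^\circ$ commutes with every $\pi(z)$, $z\in\Delta$, so $\pi_{\Delta^\circ}(\Delta^\circ)'' \subseteq \pi_\Delta(\Delta)'$. The substance, and the step I expect to be the main obstacle, is the reverse inclusion $\pi_\Delta(\Delta)' \subseteq \pi_{\Delta^\circ}(\Delta^\circ)''$: an abstract operator commuting with all $\pi(z)$, $z\in\Delta$, must be shown to be an ultraweak limit of elements built from $\pi(\Delta^\circ)$. Here I would invoke the Stone--von Neumann theorem — $\pi$ is irreducible on $G\times\widehat{G}$, so $\{\pi(z):z\in G\times\widehat G\}'' = \B(L^2(G))$ and $\{\pi(z)\}' = \C 1$ — and diagonalize the commuting family $\{\pi(z)\}_{z\in\Delta}$ by a Zak-type Fourier transform over $\widehat{\Delta}\cong(G\times\widehat G)/\Delta^\circ$, using Weil's/Poisson summation to pass between $\Delta$ and the quotient; the commutation relations then force the resulting operator field to be assembled from $\pi(\Delta^\circ)$. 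In practice the cleanest route is to recognize that this commutation theorem, together with the trace identification of the previous paragraph, is exactly what Rieffel's imprimitivity bimodule between $\vN(\Delta,c)$ and $\vN(\Delta^\circ,c^{\mathrm{op}})$ delivers simultaneously, so that the two assertions are most naturally proved together rather than in sequence.
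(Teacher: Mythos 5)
First, a point of comparison: the paper does not prove this proposition at all --- it is quoted directly from \cite[Theorem 2.16]{Ri88} (see also \cite{Be04}) --- so your sketch is measured against that source rather than an in-paper argument. Several of your structural ideas are sound: the computation showing $\pi$ is a $c$-projective representation, hence that $\delta_z \mapsto \pi(z)$ defines a $*$-homomorphism on $\C(\Delta,c)$; the strategy of exhibiting a faithful normal trace on $\pi(\Delta)''$ pulling back to the canonical trace, which does deliver both faithfulness (a faithful trace kills any weakly closed kernel) and normality (a trace-preserving $*$-isomorphism of von Neumann algebras is automatically normal); and the observation that $\pi_{\Delta^\circ}(\Delta^\circ)'' \subseteq \pi_\Delta(\Delta)'$ holds by the very definition of $\Delta^\circ$.

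There are, however, two genuine gaps. (a) You propose to choose $g$ so that $\mathcal{G}(g,\Delta)$ is a tight frame. By the density theorem for Gabor frames no single-window Gabor frame over $\Delta$ exists when $\covol(\Delta) > 1$, so this step fails for a large class of lattices; one needs a finite multi-window tight frame (these always exist, but that existence must itself be established) together with an amplified intertwiner into $\ell^2(\Delta) \otimes \C^k$. (b) More seriously, for the hard inclusion $\pi_\Delta(\Delta)' \subseteq \pi_{\Delta^\circ}(\Delta^\circ)''$ you propose to ``diagonalize the commuting family $\{\pi(z)\}_{z \in \Delta}$'' by a Zak-type transform. This family is not commuting: $\pi(z)\pi(w) = c(z,w)\overline{c(w,z)}\,\pi(w)\pi(z)$, and the phase $c(z,w)\overline{c(w,z)}$ is nontrivial unless $\Delta \subseteq \Delta^\circ$. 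In general $\pi(\Delta)''$ is a noncommutative twisted group von Neumann algebra and cannot be simultaneously diagonalized; the Zak-transform route works only in special cases such as $\alpha\Z^d \times \beta\Z^d$ with $\alpha\beta \in \N$. Irreducibility of $\pi$ on all of $G \times \widehat{G}$ (Stone--von Neumann) gives $\pi(G \times \widehat{G})' = \C 1$ but does not by itself force an operator commuting with $\pi(\Delta)$ to lie in $\pi(\Delta^\circ)''$. Your closing appeal to Rieffel's imprimitivity bimodule is accurate but is a citation rather than a proof --- indeed it is exactly the citation the paper itself relies on --- so the substantive half of the second assertion remains unproved in your sketch.
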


Applying \Cref{prop:extension_gabor} to the adjoint lattice we obtain a faithful normal representation $\vN(\Delta^{\circ}, c) \to \B(\Hi)$. Formally this is equivalent to having a faithful normal representation $\vN(\Delta^\circ, c)^{\mathrm{op}} \to \B(\overline{\Hi})$ where $\overline{\Hi}$ denotes the complex conjugate space. Identifying this with $L^2(G)$ using pointwise complex conjugation and identifying $\vN(\Delta^\circ, c)^{\mathrm{op}}$ with $\vN(\Delta^\circ, c^{\mathrm{op}})$, we can view this representation as a right action on $L^2(G)$ given by
\[ f \cdot b = \overline{  \Big( \sum_{z \in \Delta^\circ} b(z) \pi(z) \Big)^* f } = \sum_{z \in \Delta^{\circ}} b(z) \overline{ \pi(z)^* f } . \]
By \Cref{prop:extension_gabor} these two actions turn $L^2(G)$ into an $M$-$N$-bimodule. For the computation of the center-valued von Neumann dimension of this module, see \cite[Theorem 3.4]{Ri88} or \cite{Be04}.

\begin{proposition}\label{prop:cdim-gabor}
We have that
\[ \cdim_M \Hi = \covol(\Delta) I . \]
\end{proposition}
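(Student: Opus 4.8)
The plan is to reduce the statement to the trace computation underlying Rieffel's Theorem 3.4 in \cite{Ri88} (and Bédos \cite{Be04}), but to carry the reduction out through the module machinery of \Cref{sec:bimodule}. By \Cref{prop:extension_gabor} the commutant $\tilde{M} \coloneqq \B({}_M \Hi)$ is exactly $N_0 = \vN(\Delta^\circ, c)$ acting on $\Hi = L^2(G)$, and both $M$ and $N_0$ carry their canonical traces (each giving the unit the value $1$), with common center $Z = M \cap N_0$. I would fix a well-localized window $g \in L^2(G)$ — for instance a nonzero element of Feichtinger's algebra — so that both $\mathcal{G}(g,\Delta)$ and $\mathcal{G}(g,\Delta^\circ)$ are Bessel and $g$ is simultaneously left and right bounded. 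The synthesis map $\rbdd{g} \colon L^2(M,\tau) \to \Hi$, $\widehat{\lambda_c(z)} \mapsto \pi(z)g$, is then bounded and left $M$-linear, and I apply the left-module analogue of \Cref{prop:finitely-generated-modules} (i) with $T = \rbdd{g}$.

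Concretely, this gives $\cdim({}_M\Hi) \cdot E_Z^{N_0}(\rbdd{g}\,\rbdd{g}^*) = E_Z^{M'}(\rbdd{g}^*\,\rbdd{g})$, where $M' = JMJ \subseteq \B(L^2(M))$ carries the trace $\tau'$. The two operators here are familiar objects: $\rbdd{g}\,\rbdd{g}^* = S_g$ is the Gabor frame operator $S_g f = \sum_{z \in \Delta} \langle f, \pi(z)g\rangle \pi(z)g$, which lies in $N_0$, while $\rbdd{g}^*\,\rbdd{g}$ is the Gram operator of the system on $L^2(M)$. I would next invoke the Janssen representation
\[ S_g = \frac{1}{\covol(\Delta)} \sum_{w \in \Delta^\circ} \langle g, \pi(w)g\rangle\, \pi(w), \]
valid for well-localized $g$, which exhibits $S_g$ explicitly as an element of $N_0 = \vN(\Delta^\circ, c)$.

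With these identifications the scalar case is immediate: the canonical trace on $N_0$ reads off the $\pi(e)$-coefficient, giving $\tau_{N_0}(S_g) = \covol(\Delta)^{-1}\|g\|_2^2$, while the vector state at $\widehat{1}$ gives $\tau'(\rbdd{g}^*\rbdd{g}) = \|\rbdd{g}\widehat{1}\|^2 = \|g\|_2^2$. Composing the displayed module identity with the trace on $Z$ therefore yields $\cdim({}_M\Hi) \cdot \covol(\Delta)^{-1}\|g\|_2^2 = \|g\|_2^2$, i.e.\ the scalar value $\covol(\Delta)$. To obtain the full statement $\cdim({}_M\Hi) = \covol(\Delta)I$ one must upgrade these two computations from the scalar traces to the center-valued traces $E_Z^{N_0}$ and $E_Z^{M'}$, and check that $E_Z^{M'}(\rbdd{g}^*\rbdd{g}) = \covol(\Delta)\cdot E_Z^{N_0}(S_g)$.

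This last matching is where the real work lies, and is the step I expect to be the main obstacle: the frame operator $S_g$ and the Gram operator live in the two different (though commuting and center-sharing) algebras $N_0$ and $M'$, so equating their center-valued traces is not merely reading off a coefficient. I would handle it by computing $E_Z^{N_0}$ and $E_Z^{M'}$ explicitly from the central structure of the twisted group algebras — in effect a center-valued refinement of the Janssen representation — and observing that the constant $\covol(\Delta)$ is genuinely scalar, which forces $\cdim({}_M\Hi)$ to be the constant function $\covol(\Delta)$ rather than a nonconstant element of $\widehat{Z}^+$. Since this refinement is precisely the content of \cite[Theorem 3.4]{Ri88} and \cite{Be04}, I would at this point defer to those references for the center-valued computation itself.
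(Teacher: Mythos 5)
First, a point of comparison: the paper does not actually prove \Cref{prop:cdim-gabor}; it is quoted directly from \cite[Theorem 3.4]{Ri88} and \cite{Be04}. Since your argument also ends by deferring ``the center-valued computation itself'' to those same references, you have not produced an independent proof but rather a partially worked reduction whose decisive step is the result being proved. That is consistent with how the paper treats the proposition, but it means the sketch cannot stand on its own.

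If it is meant to stand on its own, there are concrete gaps beyond the deferred step. (a) Invoking the left-module analogue of \Cref{prop:finitely-generated-modules}~(i) with $T = \rbdd{g}$ presupposes that $_M \Hi$ is finitely generated, i.e.\ that $\cdim(_M \Hi)$ is bounded --- which is part of what is being established; this needs a separate input (e.g.\ a finite multi-window Bessel family whose frame operator is invertible, itself a nontrivial fact from \cite{Ri88}). (b) Even granting the matching $E_Z^{M'}(\rbdd{g}^*\rbdd{g}) = \covol(\Delta)\, E_Z^{N_0}(S_g)$, the identity $\cdim(_M \Hi)\cdot E_Z^{N_0}(S_g) = \covol(\Delta)\, E_Z^{N_0}(S_g)$ only forces $\cdim(_M \Hi) = \covol(\Delta)$ almost everywhere on the central support of $E_Z^{N_0}(S_g)$; your remark that ``$\covol(\Delta)$ is genuinely scalar'' does not bridge this, since a single window in Feichtinger's algebra need not have frame operator of full central support (for a general lattice in a general LCA group no single-window frame need exist), so one must vary $g$ over a generating family or otherwise argue that these supports exhaust $Z$. (c) The scalar cross-check silently applies two a priori different states on $Z$ --- the restrictions of the canonical traces of $M$ and of $N_0$ --- to the two sides; they do agree (both read off the coefficient of the identity, since $Z \subseteq \vN(\Delta\cap\Delta^\circ,c)$), but this deserves a sentence. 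The scalar computations themselves ($\tau_{N_0}(S_g) = \covol(\Delta)^{-1}\|g\|_2^2$ via the Janssen representation and $\tau'(\rbdd{g}^*\rbdd{g}) = \|g\|_2^2$ via the trace vector) are correct, and the identification $\B(_M\Hi) = \vN(\Delta^\circ,c)$ via \Cref{prop:extension_gabor} is the right starting point.
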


We fix the tracial state $\tau$ on $M$ given by $\tau(\delta_z) = \delta_{z,0}$ for $z \in \Delta$. The unique trace $\kappa$ on $N$ that is aligned with $\tau$ is given by $\kappa ( \delta_z ) = \covol(\Delta) \delta_{z,0}$ for $z \in \Delta^{\circ}$, see \cite[p.\ 278]{Ri88}.

Given a lattice $\Delta \subseteq G \times \widehat{G}$ and $g \in L^2(G)$, the corresponding Gabor system is denoted by
\begin{equation}
    \mathcal{G}(g,\Delta) = ( \pi(z) g )_{z \in \Delta} . \label{eq:gabor-system}
\end{equation}
Recall that $\mathcal{G}(g,\Delta)$ is said to be a \emph{Bessel sequence} with bound $B$ if
\[ \sum_{z \in \Delta} | \langle f, \pi(z) g \rangle |^2 \leq B \| f \|_2^2, \quad f \in L^2(G). \]

\begin{proposition}\label{prop:bessel-char}
The following hold for $g \in L^2(G)$:
\begin{enumerate}
    \item $g$ is right $\tau$-bounded with $\| \rbdd{f} \|^2\leq B$ if and only if $\mathcal{G}(g,\Delta)$ is a Bessel sequence with bound $B$.
    \item $g$ is left $\kappa$-bounded with $\| \lbdd{g} \|^2 \leq B'$ if and only if $\mathcal{G}(g,\Delta^{\circ})$ is a Bessel sequence with bound $\covol(\Delta) B'$.
\end{enumerate}
\end{proposition}

\begin{proof}
Note that $L^2(M,\tau) \cong \ell^2(\Delta)$ with the counting measure, so $g$ is right $\tau$-bounded with $\| \rbdd{g} \|^2 \leq B$ if and only if
\[ \Big\| \sum_{z \in \Delta} a(z) \pi(z) g \Big\|^2 \leq B \| a \|_2^2 , \quad a \in \ell^2(\Delta) . \]
This means exactly that the Gabor system $\mathcal{G}(g,\Delta)$ has upper Riesz bound $B$, equivalently is a Bessel sequence with bound $B$ by \cite[Theorem 3.2.3]{Ch03}. Moreover, $L^2(N,\kappa) \cong \ell^2(\Delta^{\circ})$ but due to the normalization of $\kappa$ this $\ell^2$-space is equipped with the counting measure weighted by $\covol(\Delta)$. Hence $g$ is left $\kappa$-bounded with $\| \lbdd{g} \|^2 \leq B'$ if and only if
\[ \Big \| \sum_{z \in \Delta^{\circ}} b(z) \pi(z) g \Big\|^2 \leq \covol(\Delta)B' \| b \|_2^2 , \quad b \in \ell^2(\Delta^{\circ}) , \]
i.e.,\ precisely when $\mathcal{G}(g,\Delta^{\circ})$ is a Bessel sequence with bound $\covol(\Delta)B'$.
\end{proof}

\begin{remark}
Bessel duality holds more generally for Gabor systems along closed, cocompact subgroups of the time-frequency plane, cf.\ \cite[Theorem 6.4]{JaLe16}. In this case, with the appropriate normalization of measures on $\Delta$ and $\Delta^{\circ}$, the Bessel bounds of the two Gabor systems are identical. Note however that we use the counting measure on both in \Cref{prop:bessel-char}, which is why the constant $\covol(\Delta)$ appears in the Bessel bound of the Gabor system over the adjoint lattice.
\end{remark}

We may now derive the Bessel duality from \Cref{thm:left-right-bounded}.

\begin{proof}[Proof of \Cref{thm:bessel}]
By \Cref{prop:extension_gabor} the $M$-$N$-bimodule $L^2(G)$ is faithful both as a left module and as a right module. By \Cref{prop:cdim-gabor} it is finitely generated as a left module. By \Cref{prop:finitely-generated-modules} (iii) it is also finitely generated as a right module, and $\cdim(_M \Hi) \cdot \cdim(\Hi_N) = 1$. Using \Cref{thm:left-right-bounded}, we have that the left $\kappa$-bounded vectors and the right $\tau$-bounded vectors of this module coincide, with $\| \rbdd{f} \| = \| \lbdd{f} \|$ for all bounded $f \in L^2(G)$. Using the characterization provided by \Cref{prop:bessel-char}, this finishes the proof.
\end{proof}

\bibliographystyle{abbrv}
\bibliography{main}

\end{document}